\newcommand{\PreserveBackslash}[1]{\let\temp=\\#1\let\\=\temp}
\newcolumntype{C}[1]{>{\PreserveBackslash\centering}p{#1}}
\newtheorem{thm}{Theorem}[section]
\newtheorem{cor}[thm]{Corollary}
\newtheorem{lem}[thm]{Lemma}
\newtheorem{prop}[thm]{Proposition}
\newtheorem{exam}{Example}
\newtheorem{rem}[thm]{Remark}
\newcommand{\C}{\mathbb{C}}%Complex plane
\newcommand{\D}{\mathbb{D}}%Unit disk in Complex plane
\newcommand{\norm}[1]{\left\Vert#1\right\Vert}%norm
\newcommand{\abs}[1]{\left\vert#1\right\vert}%absolute value/modulus
\newcommand*\CLOSED[1]{\overline{#1}}%closure
\newcommand*\COP{\mathcal K}  %Compact operators
\newcommand*\BOP{\mathcal L}  %Bounded operators
\newcommand*\MV{\mathcal MV} %Maximizing Vectors  
\newcommand*\MS{\mathcal MS} %Maximizing sequences  
\newcommand*\MINML{\mathcal ML_{\min}} %Minimal maximizing limit  
\newcommand*\WC{\mathcal WC} %cluster points 
\newcommand*\MAXML{\mathcal ML_{\max}} %Minimal maximizing limit  
\newcommand*\NA{\mathcal{NA}} %Minimal cluster point 
\let\originalleft\left
\let\originalright\right
\renewcommand{\left}{\mathopen{}\mathclose\bgroup\originalleft}
\renewcommand{\right}{\aftergroup\egroup\originalright}
\title[Weak cluster points of maximizing sequences on Banach spaces satisfying $\boldsymbol{(M_p)}$]{Weak cluster points of maximizing sequences on Banach spaces satisfying $\boldsymbol{(M_p)}$}
\begin{document}

\author[ ]{David Norrbo}

\email{d.norrbo@reading.ac.uk}
\address{Department of Mathematics and Statistics, School of Mathematical and Physical Sciences, University of Reading, Whiteknights, PO Box 220, Reading RG6 6AX, UK.}

\keywords{bounded operator, Banach space, essential norm, norm-attaining, reflexivity}

\subjclass{47B01, 47A30, 46B20, 47B37, 47L05, 46B10} 
%Primary
%Operators on Banach spaces
%Norms (inequalities, more than one norm, etc.) of linear operators

%Secondary
%Geometry and structure of normed linear spaces
%Linear operators on special spaces (weighted shifts, operators on sequence spaces, etc.)
%Linear spaces of operators
%Duality and reflexivity in normed linear and Banach spaces

\begin{abstract}
Given a bounded linear operator $T$ on a separable Banach space with property $(M_p)$, we prove that the smallest and the largest norm of weak cluster points of all maximizing sequences for $T$ can only take the values $0$ or $1$. The three classes of bounded linear operators emerging from the dichotomy of these extremal norm values coincides with the partition, created by considering the norm-attaining property and if the essential norm equals the norm.
\end{abstract}

\maketitle

\section{Introduction}

There are many partitions of the set of bounded operators on Banach spaces, for example, compact and non-compact operators, and norm-attaining and non-norm-attaining operators. In this article, we consider operators on Banach spaces that in some sense are similar to \(\ell^p, \, p>1\) spaces, and what weak cluster points of maximizing sequences tells us about the operator. It turns out that the infimum of the norm of all weak cluster points to a given operator is either $0$ or $1$. This dichotomy also appears when the supremum is considered instead of the infimum. We, therefore, obtain a partition consisting of three classes of bounded operators, which coincides with the partition obtained from considering norm-attaining operators and operators for which the essential norm coincides with the norm.

The interest of characterizing when the essential norm equals the norm arises not only from Banach space geometry, but also from a conjecture concerning the value of the norm of the Hilbert matrix operators on certain weighted Bergman spaces on the complex unit disk, \(A^p_\alpha, \, p>2+\alpha>1\) \cite[p.~516]{Karapetrovic-2018}. The essential norm has been determined in \cite[p.~23]{Lindstrom-2022} and \cite[Theorem 1.2]{Norrbo-2025}; incremental progress on determining the norm can be found in the introduction in \cite{Norrbo-2025}. Finding a suitable sufficient condition for the norm and essential norm to coincide would be a unconventional way of solving the yet unsolved conjecture. 

The Banach algebra of bounded operators and the ideal of compact operators on a Banach space \(X\) are denoted \(\BOP(X)\) and \(\COP(X)\) respectively. The essential norm of an operator \(T\in \BOP(X)\) is defined as \(\norm{T}_e = \inf_{K\in\COP(X)} \norm{T-K} \). Recall that compact operators are always completely continuous, meaning that they map weakly convergent sequences into norm convergent sequences.

For a given \(\lambda\geq 1\), a Banach space is said to have the \(\lambda\)-compact approximation property if for every \(\epsilon>0\) and compact set \(S\), there is a compact operator \(K\) with \(\norm{K}\leq \lambda\) such that \(\sup_{x\in S}\norm{(I-K)x}<\epsilon\). If we don't demand a uniform bound on the norm of the compact operators, we say the space has the compact approximation property, and if \(\lambda =1\), it has the metric compact approximation property. For a separable reflexive space, all of these approximation properties coincide, see \cite[Proposition 1]{Cho-1985} (and also \cite[Remark 1.4]{Godefroy-1988}).

Let \(X\) be a Banach space. A closed subspace \(Y\subset X\) is said to be an \(M\)-ideal if there is a space  \(Z\subset X^*\) such that \(X^*=Z\oplus_1 Y^{\perp}\). A great monograph on M-ideals is \cite{MidealBook}. For a given \(p>1\), a separable Banach space \(X\) is said to have property \((M_p)\) if \(\COP(X\oplus_p X) \) is an \(M\)-ideal in \(\BOP(X\oplus_p X)\). Two useful characterizations of property \((M_p)\) can be found in \cite[Corollary 3.6]{Kalton-1995}. Similarly, \cite[Theorem 2.13]{Kalton-1995} gives a comparable characterization for the weaker notion of \(\COP(X)\) being an \(M\)-ideal in \(\BOP(X)\). A useful property, also weaker than \((M_p)\), is \((m_p)\): For every \(x_n\to 0\) weakly, it holds that 
\begin{equation}\tag{$m_p$} 
\limsup_n \norm{x_n+x}^p = \limsup_n \norm{x_n}^p +\norm{x}^p.  
\end{equation}
 It is worth noting that a space having \((M_p), \, p>1\) is reflexive, see \cite[VI. Proposition 5.2]{MidealBook}. As a consequence, \cite[Corollary 3.6]{Kalton-1995} implies a separable Banach space has \((M_p)\) if and only if it is reflexive, has \((m_p)\) and the compact approximation property. 

We proceed with some further notation. Let  \(T\in\BOP(X)\). The set of maximizing vectors, \(\MV := \MV(T)\), is the set of vectors in \(B_X\) satisfying \(\norm{T x} = \norm{T}\). This set is closed, and by definition non-empty if \(T \) is norm-attaining, which is denoted \(T\in\NA\). Whether or not an operator is norm attaining has been examined in, for example, \cite{Pellegrino-2009,Dantas-2023}. There is also an open problem concerning the denseness of the norm-attaining operators, initiated in \cite{Bishop-1961} (see also \cite{Lindenstrauss-1963}, and \cite{Martin-2016} for further information). It is easy to see that on a reflexive space, the compact operators are norm-attaining, but all norm-attaining operators are not compact.

 Let \(\MS :=\MS(T)\) be the set of maximizing sequences, that is, \(\MS := \{(x_n)\subset \partial B_X : \lim_n\norm{T x_n} = \norm{T}\}\). The set of weak cluster points for sequences in \(\MS\) is given by 
\[
\WC := \WC(T) :=\bigcup_{(x_n)\in \MS}\bigcap_k \CLOSED{(x_n)_{n\geq k}}^w,
\]
which is nonempty if the spaces is reflexive. For a separable reflexive Banach space, we proceed to define the minimal and maximal weak maximizing limit to be
\[
\MINML := \MINML( T)  := \inf \{\norm{x} : x\in\WC(T) \}
\]
and
\[
\MAXML := \MAXML( T)  := \sup \{\norm{x} : x\in\WC(T) \}, 
\]
respectively. Notice that \(  \MINML  ,  \MAXML \in [0,1]  \), and by a diagonalization argument the infimum and supremum are attained. Finally, we denote the unit norm weakly null sequences by \(W_0\). We proceed by presenting the main result, Theorem \ref{thm:main}, followed by some examples in Section \ref{sec:mainSec}. Section \ref{sec:proofs} contains the proof of the main result, and some alternative proofs of some parts of the main result. We also provide a short proof of the essential norm formula stated in \cite{Werner-1992} using Theorem \ref{thm:main}.

\section{The main result and examples}\label{sec:mainSec}

%With a slightly stronger assumption, we obtain a converse to Corollary \ref{cor:main}. 

The following theorem also states that if \( \WC\setminus (\{0\}\cup\partial B_X)\) is not empty, then \(\MINML=0\) and \(\MAXML=1\), creating the dichotomy \(\MINML,\MAXML\in\{0,1\}\).

\begin{thm}\label{thm:main}

Let \(X\) be a separable Banach space with \((M_p)\) and let \(T\in \BOP(X)\).  It holds that
\begin{itemize}
\item \(\MINML<1\) if and only if \(\MINML = 0\) if and only if \(\norm{T} = \norm{T}_e\), 
\item \(\MAXML>0\) if and only if \(\MAXML = 1\) if and only if \(T\in \NA\).
\end{itemize}

The result is illustrated in Table \ref{tab:APartitionOfBoundedOperators}:

\begin{table}[H]
\caption{Every row is a partition of \(\BOP(X)\)  and every column corresponds to equivalent statements.}
\begin{tabular}{ p{3cm}  p{3cm}  p{3cm} }
\hline
\multicolumn{3}{c}{\(T\in\BOP(X)\)}\\
\hline
% &  &  \\
 \(T\not\in \NA\) & \multicolumn{2}{|c}{\(T\in\NA\)}  \\
\hline
 \multirow{ 2}{*}{ \(\MAXML(T)=0\) } & \multicolumn{2}{|c}{\(\MAXML(T)>0\)}   \\ 
\cline{2-3}
 & \multicolumn{2}{|c}{\(\MAXML(T)=1\)}   \\ 
\hline
 \multicolumn{2}{c|}{\(\norm{T}= \norm{T}_e\)} & \(\norm{T}> \norm{T}_e\) \\
\hline
\multicolumn{2}{c|}{\(\MINML(T)<1\)} & \multirow{ 2}{*}{ \(\MINML(T)=1\) } \\
\cline{1-2}
\multicolumn{2}{c|}{\(\MINML(T)=0\)} &  \\
\hline
& & 
\end{tabular}\label{tab:APartitionOfBoundedOperators}
\end{table}

\end{thm}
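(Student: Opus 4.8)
The plan is to treat the two bullets separately; the second needs only $(m_p)$, while the converse half of the first is where the full force of $(M_p)$ is required. For the second bullet I would first record the easy directions: if $T\in\NA$ then any maximizing vector $u\in\MV(T)$ makes the constant sequence $(u,u,\dots)$ a maximizing sequence with unique weak cluster point $u$, so $\MAXML=\norm u=1$, and $\MAXML=1\Rightarrow\MAXML>0$ is trivial. For the remaining implication $\MAXML>0\Rightarrow T\in\NA$, I would take $x\in\WC(T)$ with $\norm x>0$, arising as the weak limit (after passing to a subsequence) of a maximizing sequence $(x_n)$. Applying $(m_p)$ to $x_n=(x_n-x)+x$ gives $\limsup_n\norm{x_n-x}^p=1-\norm x^p$; since $T$ is weak-to-weak continuous, $T(x_n-x)\to0$ weakly, so applying $(m_p)$ to $Tx_n=T(x_n-x)+Tx$ gives $\norm T^p=\limsup_n\norm{T(x_n-x)}^p+\norm{Tx}^p$. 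Bounding $\limsup_n\norm{T(x_n-x)}^p\le\norm T^p(1-\norm x^p)$ then forces $\norm{Tx}=\norm T\norm x$, so $x/\norm x$ is a maximizing vector and $T\in\NA$.

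For the first bullet, $\MINML<1\iff\MINML=0$ is Proposition \ref{prop:main}, and $\MINML<1\Rightarrow\norm T=\norm T_e$ follows by combining the equivalence of $\MINML<1$ with the existence of a maximizing sequence having no norm-convergent subsequence together with Corollary \ref{cor:main}. The missing converse $\norm T=\norm T_e\Rightarrow\MINML=0$ I would derive from the essential-norm formula
\[
\norm T_e=\sup\{\,\limsup_n\norm{Tx_n}:(x_n)\in W_0\,\},
\]
with the supremum attained. One inequality is immediate: for compact $K$ and $(x_n)\in W_0$ complete continuity gives $\norm{Kx_n}\to0$, so $\limsup_n\norm{Tx_n}=\limsup_n\norm{(T-K)x_n}\le\norm{T-K}$; taking the infimum over $K$ and then the supremum over sequences yields $\sup\le\norm T_e$.

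The reverse inequality $\norm T_e\le\sup\{\dots\}$ is the crux and the main obstacle, and it is here that $(M_p)$ enters through the compact approximation property and the associated M-ideal approximating sequence. I would take a shrinking sequence of compact operators $K_n$ with $\norm{K_n}\le1$, $K_n\to I$ and $K_n^*\to I$ strongly, for which the M-ideal structure gives $\norm T_e=\lim_n\norm{T(I-K_n)}$. Choosing unit vectors $w_n$ with $\norm{T(I-K_n)w_n}\to\norm T_e$ and setting $u_n=(I-K_n)w_n$, the relation $K_n^*\to I$ strongly yields $u_n\to0$ weakly, while the $\ell_p$-asymptotic orthogonality supplied by $(M_p)$ (refining $(m_p)$) gives $\limsup_n(\norm{K_nw_n}^p+\norm{u_n}^p)\le\limsup_n\norm{w_n}^p\le1$, hence $\limsup_n\norm{u_n}\le1$. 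Normalizing, $(u_n/\norm{u_n})\in W_0$ and $\limsup_n\norm{T(u_n/\norm{u_n})}\ge\norm T_e$ (the case $\norm T_e=0$ being trivial), which establishes the reverse inequality; attainment by a single weakly null sequence then follows by diagonalization, using that $X$ separable and reflexive makes $X^*$ separable and the weak topology metrizable on $B_X$. Granting the formula, when $\norm T=\norm T_e$ a weakly null unit sequence attaining it is, after passing to a subsequence, maximizing, so $0\in\WC(T)$ and $\MINML=0$. Assembling the two bullets with the observations $\MINML,\MAXML\in\{0,1\}$ and $\norm T>\norm T_e\iff\MINML=1$ then produces the three equivalence classes displayed in the table.
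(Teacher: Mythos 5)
Your proposal is correct in substance but follows a genuinely different route from the paper's self-contained argument. For the crux implication \(\norm{T}=\norm{T}_e\Rightarrow\MINML=0\), the paper's own proof of Theorem \ref{thm:main} simply cites the formula \(\norm{T}_e=\sup_{(x_n)\in W_0}\limsup_n\norm{Tx_n}\), which is stated \emph{without proof} in Werner's paper, and then supplies an independent argument (Proposition \ref{prop:EqualNormImplyMINMLnull}) that avoids the formula entirely: it proceeds by contraposition, showing that \(\MINML=1\) forces \(\MV\) to be norm-compact and every maximizing sequence to cluster in norm inside \(\MV\), and then splits \(B_X\) into a neighbourhood \(N\supset\MV\) (where \((I-K_n)T\) is uniformly small) and \(B_X\setminus N\) (where \(\norm{Tx}\) stays away from \(\norm{T}\)), concluding \(\norm{T}_e<\norm{T}\). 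You instead prove Werner's formula directly, using an approximating sequence \((K_n)\subset B_{\COP(X)}\) with \(K_n\to I\), \(K_n^*\to I\) strongly and \(\norm{T}_e=\lim_n\norm{T(I-K_n)}\), and extract a weakly null sequence \(u_n=(I-K_n)w_n\) realizing the essential norm. This inverts the paper's logic: the paper derives Werner's formula \emph{from} Theorem \ref{thm:main} in its final proposition (via proximinality of \(\COP(X)\)), whereas you establish the formula first and obtain the theorem as a consequence, making that final proposition a byproduct; the price is that you need the adjoint convergence \(K_n^*\to I\) (available here, e.g.\ by passing to convex blocks of Kalton's sequence in the separable reflexive setting), which the paper's geometric argument never uses. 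Your treatment of the \(\MAXML\)/\(\NA\) bullet spells out the Pellegrino--Teixeira \((m_p)\) computation that the paper delegates to Remark \ref{rem:WMP}; that part is fine.

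One step as written is faulty, though easily repaired. The claimed bound \(\limsup_n\left(\norm{K_nw_n}^p+\norm{u_n}^p\right)\le\limsup_n\norm{w_n}^p\) is not an instance of \((m_p)\): that property concerns a \emph{fixed} vector plus a weakly null sequence, while here both summands \(K_nw_n\) and \(u_n\) vary with \(n\); such a uniform asymptotic \(\ell_p\)-orthogonality for a given approximating sequence is not automatic and would require a careful construction or citation from Kalton--Werner. Fortunately you do not need it: Kalton's characterization, cited in the paper as \cite[Theorem 2.4 (4)]{Kalton-1993}, yields a sequence with \(\norm{I-K_n}\to 1\), whence \(\limsup_n\norm{u_n}\le\lim_n\norm{I-K_n}=1\) immediately. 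Note that \(\norm{I-K_n}\to 1\) must be in your list of properties anyway, since with only \(\norm{K_n}\le 1\) the upper estimate in your limit formula degrades to \(\limsup_n\norm{T(I-K_n)}\le 2\norm{T}_e\). Finally, the closing diagonalization is unnecessary: the sequence \((u_n/\norm{u_n})\) you construct already attains the supremum.
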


\begin{rem}\label{rem:ThreeClasses}
From Theorem \ref{thm:main}, it follows that \(\BOP(X)\) can be partitioned into three classes: 
\begin{itemize}
\item \(\MINML=\MAXML=0\) if and only if \(\norm{T} = \norm{T}_e\) and \(T\notin\NA\),
\item \(\MINML = 0\) and \(\MAXML = 1\) if and only if \(\norm{T} = \norm{T}_e\) and \(T\in \NA\), and 
\item \(\MINML=\MAXML=1\) if and only if \(\norm{T} > \norm{T}_e\) and \(T\in \NA\). 
\end{itemize}
The implication \(T\notin \NA \ \Rightarrow \ \norm{T} = \norm{T}_e\) also follows from \cite[Proposition 8 (i)]{Werner-1992}.
\end{rem}

\begin{rem}
Concerning the assumptions in Theorem \ref{thm:main}, it is sufficient to assume \(X\) is a separable reflexive Banach space with \((m_p)\), except for the implication: If  \(\norm{T} = \norm{T}_e \), then there is a weakly null maximizing sequence, see the proof of Proposition \ref{prop:EqualNormImplyMINMLnull}. For this implication we need \(X\) to have the compact approximation property.   
\end{rem}

\begin{rem}\label{rem:WMP}
It is clear that 
\[T\in \NA \ \Longrightarrow \ \MAXML = 1 \ \Longrightarrow \ \MAXML >0.
\]
The implication \(\MAXML >0 \Rightarrow  T\in \NA \) for all \(T\in \BOP(X)\) is the definition of \(X\) having the weak maximizing property, which has been proved for the spaces \(\ell^p, \, p>1\) in \cite{Pellegrino-2009}. The proof for reflexive spaces with \((m_p)\) follows from the calculations done on the 11 last lines of the proof of Theorem 1 in \cite{Pellegrino-2009}. Moreover, this proof shows that if \((x_n)\in \MS\) and \(x_n\to x_0\) weakly for some \(x_0\in B_X\setminus \{0\}\), then \(x_0/\norm{x_0} \in \MV\), that is, \(x_0/\norm{x_0}\) is a maximizing vector for \(T\).

\end{rem}

\begin{exam}
In \cite[Corollary 2.2]{Hennefeld-1973} it is proved that \( \COP(\ell^p) \) is an \(M\)-ideal in \( \BOP(\ell^p) \), for \( p>1\). Since \(\ell^p \oplus_p \ell^p\) is isometrically isomorphic to \(\ell^p\), it follows that \(\ell^p, \, p>1\) satisfies \((M_p)\).

The weighted Bergman spaces \(A^p_v, \, p\geq 1\) consists of complex valued analytic functions on the open, complex, unit disk, \(f\colon \D \to \C\), and are defined as
\[
A^p_v := \bigg\{ f\in \mathcal H(\D) : \norm{f} := \bigg( \int _{\D}  \abs{f(z)}^p v(z) \, dA(z) \bigg)^{\frac{1}{p}}<\infty \bigg\},
\]
where \(dA(z)\) is the normalized uniform area measure \(dA(x+iy) = dx \, dy/\pi\) and \(L^1(\D,dA)\ni v\colon \D \to ]0,\infty[\) is a suitable (weight) function, for example, continuous, radial (\(v(z) = v(\abs{z})\)) and monotone function. Using \cite[Corollary 3.6]{Kalton-1995}, one can prove that the reflexive weighted Bergman spaces \(A^p_v, \, p>1\) has \((M_p)\), see also \cite[Corollary 4.8]{Kalton-1995}. The fact that \(A^p_v\) has \((m_p)\) can be proved similarly to \cite[Proof of Theorem 2]{Brezis-1983}; some details are provided in the corrigendum to \cite{Lindstrom-2022}.
\end{exam}

Next, we give examples of operators in each of the three classes presented in Remark \ref{rem:ThreeClasses}.
\begin{exam}
Consider the weighted Bergman spaces \(A^p_\alpha, \, p>1,\alpha >-1\), where the weight function is given by \(v(z) = (1+\alpha)(1-\abs{z}^2)^\alpha\). For general weights, the only additional assumption we impose in this example, other than a weight yielding the \((M_p)\) property, is that the evaluation maps \(\delta_z\colon A^p_v \to \mathbb C \colon f\mapsto f(z), \, z\in \D\) are bounded. A good monograph on the spaces \(A^p_\alpha\) is \cite{Zhu-2007}. In \cite[Theorem 4.14]{Zhu-2007}, it is proved that the evaluation maps are bounded on \(A^p_\alpha\). 

Consider the multiplication operators \(M_g \colon f \mapsto gf\) on \(A^p_\alpha\), where \(g\colon \D \to \C\) is bounded and analytic. If \(g\) is constant, then \(M_g = c I\) for some \(c\in \C\). Clearly \(M_g\) attains its norm at every unit vector in \(A^p_\alpha\) proving \(\MAXML(M_g) = 1\). For \(n=0,1,2\ldots\), let \(e_n\) denote the normalized version of the basis vector \(z^n\). Then \((e_n)\) is a weakly null sequence, proving \(\MINML(M_g) = 0\). Now, assume that \(g\) is not constant. It is well known that \(\norm{M_g} = \sup_{z\in \D} \abs{g(z)}\); the lower bound for the norm can be obtained using the bounded evaluation maps \(\delta_z\). Moreover, partitioning the open unit disk into a closed disk with radius \(1/2\), \(\overline{(1/2)\D}\),  and its complement, it follows from the maximum modulus principle and the fact that any analytic function has at most finitely many zeros in \(\overline{(1/2)\D}\) that \(\norm{M_g f} < \sup_{z\in \D} \abs{g(z)} \) for all \(f\in B_{A^p_\alpha}\). Hence, \(M_g\)  is not norm-attaining. 

To summarize, for any bounded analytic function \(g\colon \D \to \C\), we have \(M_g\in\BOP(A^p)\) and  \(\MINML(M_g) = 0\). If \(g\) is constant, then \(\MAXML(M_g) = 1\), else \(\MAXML(M_g) = 0\).
\end{exam}

\begin{exam}
The null operator satisfy \(\MINML = 0, \MAXML = 1\), but other compact operators \(K\) cannot have \(\MINML(K) = 0\), therefore, \(\MINML(K) =\MAXML(K)  = 1\). This can be seen, for example, by the fact that they are completely continuous, or the fact that \(\norm{K}>0=\norm{K}_e\) in conjunction with Theorem \ref{thm:main}.
\end{exam}

\begin{exam}
On the complex-valued (or real valued) \(\ell^p(\mathbb N)\)-space, the pointwise multiplication operator can belong to all three classes, depending on the symbol \(b\in \ell^\infty\). The norm is given by \(\norm{M_b} = \sup_n \abs{b(n)}\) and the essential norm is given by \(\norm{M_b}_e = \limsup_n \abs{b(n)}\). We can see that the maximum modulus principle for analytic functions is the reason \(\norm{M_g}_e < \norm{M_g}\) never happens on Bergman spaces. It is, however, easy to see that both \(\norm{M_b}_e < \norm{M_b}\) and \(\norm{M_b}_e  = \norm{M_b}\) can happen on \(\ell^p\). Moreover, if \(\abs{b(k)}<\sup_n \abs{b(n)}\) for all \(k\), then \(\MAXML = 0\).

\end{exam}

\section{Proof of Theorem \ref{thm:main} and some alternative proofs}\label{sec:proofs}

%Accepting Proposition \ref{prop:main} for the moment, we begin by presenting the proof of Theorem \ref{thm:main}. 

\begin{proof}[Proof of Theorem \ref{thm:main}]
The proof of the equivalences involving \(\MAXML\) and \(\NA\) are contained in Remark \ref{rem:WMP}. Moreover, it is clear that \( \MINML = 0 \) implies both \(\norm{T} = \norm{T}_e \) and \( \MINML <1 \). In \cite{Werner-1992} on page 499, after the proof of Lemma 5, it is claimed (without proof) that 
\begin{equation}\label{eq:essNormFormula}
\norm{T}_e = \sup_{(x_n)\in W_0} \limsup_n \norm{T x_n }
\end{equation}
holds if \(X\) has \((M_p)\). It follows that \(\norm{T} = \norm{T}_e  \ \Rightarrow \ \MINML = 0 \). Finally, assume \(T\in\BOP(X)\) is an operator with \(\MINML <1 \). If \(\norm{T} = \norm{T}_e\), then \(\MINML = 0 \) by \eqref{eq:essNormFormula}. If \(\norm{T} > \norm{T}_e\), the same conclusion, \(\MINML = 0 \), follows from \cite[Lemma 3.4]{Siju-2024}.
\end{proof}

In the proof above, the two statements \(\MINML<1\) implies \(\MINML = 0\), and \(\norm{T} = \norm{T}_e\) implies  \(\MINML = 0 \) relied on the essential norm formula \eqref{eq:essNormFormula}. We proceed with proving these statements without the use of \eqref{eq:essNormFormula}, which in turn would prove Theorem \ref{thm:main} without the use of  \eqref{eq:essNormFormula}. We end the section with a simple proof of the formula for the essential norm \eqref{eq:essNormFormula} using Theorem \ref{thm:main}.

\begin{prop}\label{prop:main}
Let \(X\) be a separable reflexive Banach space with \((m_p)\), and let \(T\in \BOP(X)\). Then \(\MINML<1\) implies \(\MINML = 0\). 
\end{prop}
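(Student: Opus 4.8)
The plan is to take the hypothesis \(\MINML<1\) at face value and use it to manufacture a genuine weakly null maximizing sequence; once such a sequence exists, \(0\) is one of its weak cluster points, so \(0\in\WC(T)\) and hence \(\MINML=0\). Since the infimum defining \(\MINML\) is attained, I would fix \(x\in\WC(T)\) with \(\norm{x}=\MINML<1\), realized as a weak cluster point of some \((x_n)\in\MS(T)\). Because \(X\) is separable and reflexive, the weak topology is metrizable on bounded sets, so a weak cluster point is a weak subsequential limit; passing to a subsequence I may assume \(x_n\to x\) weakly, and then \(y_n:=x_n-x\) is weakly null.

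The engine of the argument is to invoke property \((m_p)\) twice, once in the domain and once in the range. Applying it to the weakly null sequence \((y_n)\) together with the fixed vector \(x\), and using \(y_n+x=x_n\) with \(\norm{x_n}=1\), I obtain — along a further subsequence on which \(\norm{y_n}\to\beta\) — the identity \(1=\beta^p+\norm{x}^p\), so that \(\beta=(1-\norm{x}^p)^{1/p}>0\). The decisive second application is in the range: bounded operators are weak-to-weak continuous, so \(Ty_n\to 0\) weakly, and \((m_p)\) applied to \((Ty_n)\) together with \(Tx\), using \(Ty_n+Tx=Tx_n\) with \(\norm{Tx_n}\to\norm{T}\), yields (along a subsequence with \(\norm{Ty_n}\to\gamma\)) the identity \(\norm{T}^p=\gamma^p+\norm{Tx}^p\).

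With \(\beta>0\) in hand I would normalize the weakly null part: set \(z_n:=y_n/\norm{y_n}\), well defined for large \(n\). Then \(\norm{z_n}=1\), \(z_n\to 0\) weakly, and \(\norm{Tz_n}\to\gamma/\beta\). Two bounds pin this ratio down. On the one hand \(\gamma/\beta=\lim_n\norm{Tz_n}\le\norm{T}\). On the other, combining the two \((m_p)\) identities with the elementary estimate \(\norm{Tx}\le\norm{T}\norm{x}\) gives \(\gamma^p=\norm{T}^p-\norm{Tx}^p\ge\norm{T}^p(1-\norm{x}^p)=\norm{T}^p\beta^p\), hence \(\gamma/\beta\ge\norm{T}\). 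Therefore \(\gamma/\beta=\norm{T}\), so \((z_n)\) is a unit-norm maximizing sequence converging weakly to \(0\); consequently \(0\in\WC(T)\) and \(\MINML=0\).

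I expect the genuinely conceptual content to be concentrated in the twofold use of \((m_p)\) made possible by the weak-to-weak continuity of \(T\), which is what lets the norm splitting be exploited simultaneously for \((y_n)\) and \((Ty_n)\). The remaining delicate points are bookkeeping rather than obstacles: arranging the two subsequence passages so that \(x_n\to x\) weakly and \(\norm{y_n},\norm{Ty_n}\) converge along the \emph{same} subsequence (so the relevant \(\limsup\)s become honest limits), and justifying that the weak cluster point \(x\) is a subsequential weak limit via metrizability of the weak topology on bounded subsets of the separable reflexive space \(X\).
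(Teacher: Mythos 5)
Your proposal is correct and takes essentially the same route as the paper's proof (Lemma \ref{lem:ImportantEqualityForDictonomy}): decompose the maximizing sequence about its weak limit $x$, apply $(m_p)$ in both the domain and the range (via weak-to-weak continuity of $T$), and squeeze $\lim_n\norm{Tz_n}$ between $\norm{T}$ and the lower bound coming from $\norm{Tx}\le\norm{T}\norm{x}$ to conclude that the normalized weakly null part is a unit-norm weakly null maximizing sequence. The only cosmetic difference is that you use $(m_p)$ together with $\norm{x}<1$ to get $\lim_n\norm{y_n}>0$ directly, whereas the paper's lemma handles the dichotomy $\lim_n\norm{y_n-x_0}=0$ versus $\inf_n\norm{y_n-x_0}>0$ by a case split so as to cover $\norm{x_0}=1$ as well.
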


Proposition \ref{prop:main} follows immediately from the following lemma, whose proof share some similarities with \cite[Proof of Lemma 3.4]{Siju-2024}.

\begin{lem}\label{lem:ImportantEqualityForDictonomy}
Let \( X\) be a separable reflexive Banach space that satisfies property \((m_p)\). Let \(T\in \BOP(X)\) and \(x_0\in \WC(T)\). There exists a sequence \((z_n)\in W_0\) such that
\[
\norm{T}^p (1 -  \norm{x_0}^p) =  \lim_n \norm{Tz_n}^p  (1-\norm{x_0}^p ). 
\]
\end{lem}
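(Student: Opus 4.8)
The plan is to reduce to the case $\norm{x_0}<1$. If $\norm{x_0}=1$, then $1-\norm{x_0}^p=0$ and both sides of the claimed identity vanish, so any sequence in $W_0$ (which is nonempty since the spaces of interest are infinite dimensional) works. So I would assume $\norm{x_0}<1$ and fix a maximizing sequence $(x_n)\in\MS$ together with a subsequence, still denoted $(x_n)$, converging weakly to $x_0$. Setting $y_n:=x_n-x_0$, the sequence $(y_n)$ is weakly null, and $Ty_n=Tx_n-Tx_0$ is weakly null as well, since a bounded operator is weak-to-weak continuous. Passing to a further subsequence, I may assume that both $\norm{y_n}^p\to c^p$ and $\norm{Ty_n}^p\to\ell$ exist.

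The heart of the argument is a double application of $(m_p)$. First I would apply $(m_p)$ to the weakly null sequence $(y_n)$ and the vector $x_0$; using $y_n+x_0=x_n$ with $\norm{x_n}=1$ this gives $c^p=1-\norm{x_0}^p$, and in particular $c>0$. Then I would apply $(m_p)$ to the weakly null sequence $(Ty_n)$ and the vector $Tx_0$; using $Ty_n+Tx_0=Tx_n$ with $\norm{Tx_n}\to\norm{T}$ this gives $\ell=\norm{T}^p-\norm{Tx_0}^p$.

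Now set $z_n:=y_n/\norm{y_n}$ for $n$ large enough that $\norm{y_n}>0$. Then $(z_n)\in W_0$, and since the denominator tends to $c^p>0$,
\[
\lim_n\norm{Tz_n}^p=\frac{\ell}{c^p}=\frac{\norm{T}^p-\norm{Tx_0}^p}{1-\norm{x_0}^p}.
\]
To identify this limit I would sandwich it: on one hand $\norm{Tz_n}\le\norm{T}$ forces the limit to be $\le\norm{T}^p$; on the other hand the elementary bound $\norm{Tx_0}\le\norm{T}\norm{x_0}$ gives $\norm{T}^p-\norm{Tx_0}^p\ge\norm{T}^p(1-\norm{x_0}^p)$, so the limit is $\ge\norm{T}^p$. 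Hence $\lim_n\norm{Tz_n}^p=\norm{T}^p$ (equivalently $\norm{Tx_0}=\norm{T}\norm{x_0}$), and multiplying through by $1-\norm{x_0}^p$ yields the stated identity.

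I expect the main obstacle to be conceptual rather than computational: recognizing that $(m_p)$ must be applied a \emph{second} time in the range, to the image sequence $(Ty_n)$, and that the two operator-norm inequalities then pin the ratio $\ell/c^p$ down to exactly $\norm{T}^p$. Everything else—extracting weakly convergent subsequences, arranging the norm limits to exist, and normalizing—is routine.
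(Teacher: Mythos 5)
Your proof is correct and takes essentially the same route as the paper's: decompose the maximizing sequence as $x_n=(x_n-x_0)+x_0$, apply $(m_p)$ once in the domain and once to the image sequence, normalize to obtain $(z_n)\in W_0$, and pin down the limit with the two elementary inequalities $\norm{Tz_n}\le\norm{T}$ and $\norm{Tx_0}\le\norm{T}\norm{x_0}$. The only difference is cosmetic: the paper splits cases on whether a subsequence of $(x_n)$ converges in norm rather than on whether $\norm{x_0}=1$, and under $(m_p)$ these conditions coincide.
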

\begin{proof}

Let \((y_n)\in \MS\) with \(y_n\to x_0\) weakly. By going to a subsequence if necessary, we can assume that \( \lim_n \norm{ y_n - x_0 }^p\) exists, and that either \( \lim_n \norm{y_n - x_0} = 0\)  or \( \inf_n\norm{y_n - x_0}>0\). If \( \lim_n  \norm{y_n - x_0} = 0\),  we have \(\norm{x_0}=1\) and we are done. If  \(\inf_n\norm{y_n - x_0}>0\), put \(z_n = (y_n-x_0)/\norm{ y_n-x_0}\). Property \((m_p)\) yields  \( 1 - \norm{x_0 }^p = \limsup_n \norm{ y_n - x_0 }^p  \). Similarly, we have \( \norm{ T }^p - \norm{ Tx_0 }^p = \limsup_n \norm{ T(y_n - x_0) }^p  \). It follows that \(\norm{ T }^p - \norm{ Tx_0 }^p =  \limsup_n \norm{ Tz_n }^p (1-\norm{ x_0 }^p) \). Combining this with the elementary inequalities
\[
\norm{ T }^p - \norm{ Tx_0 }^p   \geq  \norm{ T }^p (1-\norm{x_0}^p)  \geq \limsup_n \norm{ Tz_n }^p (1-\norm{ x_0 }^p)
\]
 yields the statement.

\end{proof}

Property \((m_p)\) yields that \(\MINML<1\) is equivalent to having a maximizing sequence with no norm-convergent subsequences. Utilizing the fact that \(\MINML = 0 \ \Rightarrow \ \norm{T} = \norm{T}_e\), Proposition \ref{prop:main} can be expressed as follows:

\begin{cor}\label{cor:main}
Let \(X\) be a separable reflexive Banach space with \((m_p)\), and let \(T\in \BOP(X)\). Then \(\norm{T} = \norm{T}_e\) if there is a maximizing sequence for \(T\) with no norm-convergent subsequences.
\end{cor}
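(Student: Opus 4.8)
The plan is to prove the contrapositive packaged as Proposition~\ref{prop:main}: assuming $\MINML<1$, produce a maximizing sequence with no norm-convergent subsequence, and then deduce $\norm{T}=\norm{T}_e$ from that. For the corollary itself I would argue directly. Suppose $(x_n)\in\MS$ has no norm-convergent subsequence. Since $X$ is separable and reflexive, by passing to a subsequence I may assume $x_n\rightharpoonup x$ weakly for some $x\in\WC(T)$ with $\norm{x}\le 1$. Set $y_n := x_n - x$; then $y_n\rightharpoonup 0$ weakly. The first thing to establish is that $(y_n)$ does \emph{not} converge in norm to $0$: if it did, $x_n\to x$ in norm, contradicting the hypothesis. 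So, after a further subsequence, $\liminf_n\norm{y_n}=:c>0$.

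\medskip

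The heart of the argument is to push the weak limit $x$ down to $0$ using property $(m_p)$ while retaining a maximizing sequence. I would first apply $(m_p)$ to the weakly null sequence $(y_n)$ against the fixed vector $x$ to get $\limsup_n\norm{y_n+x}^p=\limsup_n\norm{y_n}^p+\norm{x}^p$, i.e. $\limsup_n\norm{x_n}^p=\limsup_n\norm{y_n}^p+\norm{x}^p$. Since $\norm{x_n}=1$ for all $n$, this forces $\limsup_n\norm{y_n}^p=1-\norm{x}^p$, so the weakly null tails $(y_n)$ have norms bounded away from $0$ and controlled from above. The analogous computation applied to the \emph{images} is the key: I want to track $\norm{Ty_n}$. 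Writing $Tx_n=Ty_n+Tx$ and using that $\norm{Tx_n}\to\norm{T}$, together with an $(m_p)$-type splitting on the image side — here I must be careful, since $(m_p)$ is a property of the space $X$, not a statement about $T$, so the decomposition $\norm{Tx_n}^p\approx\norm{Ty_n}^p+\norm{Tx}^p$ is not automatic and is exactly where the main work lies.

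\medskip

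To get around this, rather than splitting norms of images I would normalize the weakly null part and show it is \emph{itself} maximizing. Set $z_n := y_n/\norm{y_n}$ (defined for large $n$ since $\norm{y_n}\ge c/2$ eventually); these are unit vectors and $z_n\rightharpoonup 0$. The claim to prove is $\lim_n\norm{Tz_n}=\norm{T}$, which would exhibit $(z_n)$ as a maximizing sequence lying in $W_0$, hence a weakly null weak cluster point, giving $\MINML=0$; and since $(z_n)$ is weakly null with unit norms it plainly has no norm-convergent subsequence (a norm limit would have to be both weakly null and of norm $1$). To verify $\norm{Tz_n}\to\norm{T}$ I would use $(m_p)$ applied to $(Tz_n)$-type quantities indirectly: the controlled decomposition of $\norm{x_n}^p$ I derived above, combined with the fact that $x_n=\norm{y_n}z_n+x$, lets me compare $\norm{Tx_n}$ to a convex-like combination of $\norm{Tz_n}$ and $\norm{Tx}$, and the extremality $\norm{Tx_n}\to\norm{T}$ together with $\norm{z_n}=1$, $\norm{Tz_n}\le\norm{T}$ pins $\norm{Tz_n}\to\norm{T}$. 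The main obstacle, as flagged, is that $(m_p)$ only governs norms in $X$, so transferring the additive-at-infinity behaviour through the operator $T$ requires squeezing between the upper bound $\norm{Tz_n}\le\norm{T}$ and a lower bound extracted from $\norm{Tx_n}\to\norm{T}$; making that squeeze rigorous — and in particular ruling out that the ``mass'' of the maximizing sequence escapes into the fixed component $Tx$ rather than the weakly null component — is the crux. Once $\MINML=0$ is in hand, $\norm{T}=\norm{T}_e$ follows from the general implication $\MINML=0\Rightarrow\norm{T}=\norm{T}_e$ invoked in the statement preceding the corollary, completing the argument.
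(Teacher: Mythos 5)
Your overall strategy is the same as the paper's (its Lemma~\ref{lem:ImportantEqualityForDictonomy}): write $x_n = y_n + x$ with $y_n \to 0$ weakly, normalize $z_n := y_n/\norm{y_n}$, show the weakly null normalized sequence is maximizing, and then invoke the general implication $\MINML = 0 \Rightarrow \norm{T} = \norm{T}_e$. However, you leave the decisive step --- $\limsup_n \norm{Tz_n} = \norm{T}$ --- unproved, explicitly calling it ``the crux,'' and the squeeze you sketch for it cannot close the gap. Working from $Tx_n = \norm{y_n}Tz_n + Tx$ with the triangle inequality, $\norm{Tx}\le\norm{T}\norm{x}$, and your bound $\lim_n\norm{y_n}^p = 1-\norm{x}^p$, the best one gets is
\[
\limsup_n \norm{Tz_n} \;\geq\; \norm{T}\,\frac{1-\norm{x}}{\left(1-\norm{x}^p\right)^{1/p}},
\]
and for $p>1$ and $0<\norm{x}<1$ the factor $(1-\norm{x})/(1-\norm{x}^p)^{1/p}$ is \emph{strictly} less than $1$ (since $(1-t)^p + t^p < 1$ on $(0,1)$). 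So nothing you wrote rules out exactly the scenario you worry about, namely that part of the maximizing ``mass'' sits in the fixed component $Tx$. As written, the proof has a genuine gap.

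The gap closes immediately once you notice that the obstruction you flagged --- ``$(m_p)$ is a property of the space $X$, not a statement about $T$, so the decomposition $\norm{Tx_n}^p\approx\norm{Ty_n}^p+\norm{Tx}^p$ is not automatic'' --- is illusory: $T\in\BOP(X)$ maps $X$ into \emph{itself}, and bounded operators are weak-to-weak continuous, so $(Ty_n)$ is a weakly null sequence in $X$ and $Tx$ is a fixed vector of $X$. Property $(m_p)$ therefore applies verbatim on the image side and gives
\[
\norm{T}^p \;=\; \lim_n \norm{Tx_n}^p \;=\; \limsup_n \norm{Ty_n + Tx}^p \;=\; \limsup_n \norm{Ty_n}^p + \norm{Tx}^p ,
\]
so $\limsup_n \norm{Ty_n}^p = \norm{T}^p - \norm{Tx}^p \geq \norm{T}^p\left(1-\norm{x}^p\right)$, using $\norm{Tx}^p\le\norm{T}^p\norm{x}^p$. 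On the other hand, after passing to a subsequence where $\lim_n\norm{y_n}$ exists, your computation in $X$ gives $\lim_n\norm{y_n}^p = 1-\norm{x}^p$, whence $\limsup_n \norm{Ty_n}^p = \left(1-\norm{x}^p\right)\limsup_n\norm{Tz_n}^p$. Since $1-\norm{x}^p \geq c^p > 0$, comparing the two displays forces $\limsup_n\norm{Tz_n}^p = \norm{T}^p$: a subsequence of $(z_n)$ (note, only a subsequence --- your claim of the full limit is slightly too strong, but a subsequence suffices) is a weakly null maximizing sequence, so $\MINML = 0$ and the rest of your argument (complete continuity of compact operators) finishes the proof. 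This image-side application of $(m_p)$ is precisely the step your proposal is missing and precisely how the paper's Lemma~\ref{lem:ImportantEqualityForDictonomy} proceeds.
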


%For the next proof, we define \(\MS_{\subset}:=\MS_{\subset}(T)\) as the set of sequences containing a subsequence in \(\MS\). It is clear that \(\MS_{\subset}\supset \MS\).

\begin{prop}\label{prop:EqualNormImplyMINMLnull}
Let \(X\) be a separable Banach space with \((M_p)\) and let \(T\in \BOP(X)\). It holds that \(\norm{T} = \norm{T}_e  \ \Rightarrow \ \MINML = 0 \).
\end{prop}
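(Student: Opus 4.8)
The plan is to manufacture an explicit weakly null maximizing sequence for $T$ by feeding the approximating compact operators coming from $(M_p)$ into the hypothesis $\norm{T}=\norm{T}_e$. Since $X$ is separable with $(M_p)$, it is reflexive and has the metric compact approximation property, and by the characterizations of property $(M_p)$ recalled in the introduction one obtains a sequence $(K_n)$ of compact operators with $K_n\to I$ and $K_n^*\to I$ in the strong operator topology and with $\limsup_n\norm{I-2K_n}\leq 1$; writing $I-K_n=\tfrac12 I+\tfrac12(I-2K_n)$ gives $\limsup_n\norm{I-K_n}\leq 1$. These are the only structural facts about $X$ I intend to use, and I may assume $\norm{T}>0$, the case $T=0$ being settled by any unit weakly null sequence.

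First I would exploit the hypothesis. For every $n$ the operator $TK_n$ is compact, so $\norm{T(I-K_n)}=\norm{T-TK_n}\geq\norm{T}_e=\norm{T}$. Choosing a unit vector $w_n$ with $\norm{T(I-K_n)w_n}\geq\norm{T}-1/n$ and setting $v_n:=(I-K_n)w_n$, we obtain $\norm{Tv_n}\geq\norm{T}-1/n$ for all $n$.

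Next I would verify the three properties that turn $(v_n/\norm{v_n})$ into a weakly null maximizing sequence. For weak nullity, fix $f\in X^*$ and compute $\langle f,v_n\rangle=\langle f,w_n\rangle-\langle K_n^*f,w_n\rangle=-\langle K_n^*f-f,w_n\rangle$, which tends to $0$ because $\norm{K_n^*f-f}\to 0$ and $\norm{w_n}=1$; hence $v_n\to 0$ weakly. For the norm, the lower bound $\norm{v_n}\geq\norm{Tv_n}/\norm{T}\geq 1-1/(n\norm{T})$ gives $\liminf_n\norm{v_n}\geq 1$, while $\norm{v_n}=\norm{(I-K_n)w_n}\leq\norm{I-K_n}$ together with $\limsup_n\norm{I-K_n}\leq 1$ gives $\limsup_n\norm{v_n}\leq 1$, so $\norm{v_n}\to 1$. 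Maximality follows, since $\norm{T}-1/n\leq\norm{Tv_n}\leq\norm{T}\norm{v_n}\to\norm{T}$ forces $\norm{Tv_n}\to\norm{T}$. Consequently $\hat v_n:=v_n/\norm{v_n}$ is a unit vector with $\hat v_n\to 0$ weakly and $\norm{T\hat v_n}=\norm{Tv_n}/\norm{v_n}\to\norm{T}$, so $(\hat v_n)\in\MS$ witnesses $0\in\WC(T)$ and therefore $\MINML=0$.

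The \emph{delicate} point, and the place where the full strength of $(M_p)$ enters rather than merely reflexivity and $(m_p)$, is the upper estimate $\limsup_n\norm{v_n}\leq 1$. It rests on the approximating sequence being asymptotically contractive, i.e. $\limsup_n\norm{I-K_n}\leq 1$, which is the quantitative residue of the $M$-ideal structure and is exactly what separates this argument from Proposition \ref{prop:main}. If one prefers not to invoke $\limsup_n\norm{I-2K_n}\leq 1$ directly, the same bound can be reached by passing to a weakly convergent subsequence $w_n\to w^*$, writing $v_n=(I-K_n)(w_n-w^*)+o(1)$ since $(I-K_n)w^*\to 0$ in norm, using $(m_p)$ to identify $\lim_n\norm{w_n-w^*}^p=1-\norm{w^*}^p\leq 1$, and appealing to the asymptotic $\ell^p$-orthogonality of the ranges of $K_n$ and $I-K_n$ on weakly null sequences to conclude $\limsup_n\norm{(I-K_n)(w_n-w^*)}^p\leq\limsup_n\norm{w_n-w^*}^p\leq 1$. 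The only remaining care is the bookkeeping of the coupled index in $v_n=(I-K_n)w_n$, which the weak-nullity computation above already dispatches cleanly.
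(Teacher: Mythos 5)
Your main argument is correct, but it takes a genuinely different route from the paper. The paper proves the contrapositive: assuming \(\MINML>0\), it invokes Proposition \ref{prop:main} to upgrade this to \(\MINML=1\), deduces that the set of maximizing vectors \(\MV\) is norm-compact, and splits \(B_X\) into a neighbourhood \(N\supset\MV\) and its complement to get \(\limsup_n\norm{(I-K_n)T}<\norm{T}\); the compact perturbations there are \(K_nT\), and only \(\norm{I-K_n}\to 1\) plus strong convergence \((I-K_n)x\to 0\) are needed, with no mention of adjoints. You instead prove the implication directly: from \(\norm{T(I-K_n)}=\norm{T-TK_n}\geq\norm{T}_e=\norm{T}\) you pick near-maximizing vectors \(w_n\) and show that \(v_n=(I-K_n)w_n\) normalizes to a weakly null maximizing sequence, weak nullity coming from \(K_n^*\to I\) strongly (which neatly handles the coupled index) and the norm control from \(\limsup_n\norm{I-K_n}\leq 1\). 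Your route bypasses Proposition \ref{prop:main} and the compactness of \(\MV\) entirely, and in effect re-proves Werner's formula \(\norm{T}_e=\sup_{(x_n)\in W_0}\limsup_n\norm{Tx_n}\), which the paper establishes separately via proximinality of \(\COP(X)\); the price is that you need the stronger form of the approximation sequence that includes adjoint convergence. Two caveats. First, the facts \(K_n^*\to I\) strongly and \(\limsup_n\norm{I-2K_n}\leq 1\) do not follow from the characterization ``reflexive \(+\) \((m_p)\) \(+\) compact approximation property'' recalled in the introduction; you should cite \cite[Theorem 2.4]{Kalton-1993} (the very theorem the paper's proof uses, which for separable \(X\) supplies such a sequence) or \cite[Chapter VI]{MidealBook}. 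Second, your optional closing variant, which replaces \(\limsup_n\norm{I-2K_n}\leq 1\) by an ``asymptotic \(\ell^p\)-orthogonality'' of the ranges of \(K_n\) and \(I-K_n\), is not justified as written---\((m_p)\) concerns a weakly null sequence against a fixed vector and gives no bound on the coupled quantity \(\norm{(I-K_n)(w_n-w^*)}\)---but since the main argument does not rely on it, this does not affect correctness.
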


\begin{proof}%CHANGES HAS BEEN MADE
We use contraposition to prove this statement. It is sufficient to assume \(T\not\equiv 0\) and \(\MINML>0\). By Proposition \ref{prop:main}, we may assume \(\MINML = 1\). In order to prove that \(\norm{T} > \norm{T}_e\), let \((K_n)\subset B_{\COP(X)}\) be a sequence satisfying \( \lim_n \norm{I-K_n} = 1\) and \( \lim_n \norm{(I - K_n) x} = 0\) for every \(x\in X\); this is possible due to \cite[Theorem 2.4 (4)]{Kalton-1993} (see also \cite[Lemma 5.1]{Harmand-1984}). Moreover, property \((m_p)\) implies that any sequence \((x_n)\subset B_X\) that converges weakly to an \(x_0\in \partial B_X\) must satisfy \( \lim_n \norm{x_n-x_0} = 0\). Hence, since the space is reflexive and \(\MINML = 1\), every \((x_n)\in \MS\) will have a norm convergent subsequence. It follows that \(MV\) is compact and
\begin{equation}\label{eq:MScapMVnotEmpty}
\CLOSED{\{x_n\}}^{\norm{\cdot}} \cap \MV \neq \emptyset  \quad  \text{ for every }  (x_n)\in \MS.
\end{equation}

Since \(\MV\) is compact, we have

\[
\lim_n\sup_{x\in\MV}  \norm{(I-K_n)Tx} = 0.
\]
It follows that there is an open set \(N\supset\MV\) that is independent of \(n\) such that
\begin{equation}\label{eq:nearMV}
\limsup_n \sup_{x\in N}  \norm{(I-K_n)Tx} <\norm{T}.
\end{equation}
Furthermore, the closed set \(B_X\setminus N\) is disjoint from \(\MV\), therefore, \eqref{eq:MScapMVnotEmpty} yields
\[
(B_X\setminus N)^{\mathbb N} \cap \MS = \emptyset.
\]
We can now conclude that

\[
\limsup_n \sup_{x\in  B_X \setminus N}  \norm{(I-K_n)Tx} \leq \limsup_n \norm{I-K_n}    \sup_{x\in  B_X \setminus N}  \norm{Tx} <\norm{T},
\]
and hence, using \eqref{eq:nearMV},
\[
\norm{T}_e = \inf_K  \max\Big\{  \sup_{x\in N\cap B_X }\norm{(T-K)x}  ,   \sup_{x\in B_X\setminus N }\norm{(T-K)x}   \Big\} < \norm{T}.
\]

\end{proof}

We end the section with a proof of the essential norm fomula for operators on separable Banach spaces with \((M_p)\), stated on \cite[p.~499]{Werner-1992}.

\begin{prop}
Let \(X\) be a separable Banach space with \((M_p)\). The essential norm can be represented as
\[
\norm{T}_e = \sup_{(x_n)\in W_0} \limsup_n \norm{Tx_n}.
\]
\end{prop}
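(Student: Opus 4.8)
The plan is to prove the two inequalities
$\sup_{(x_n)\in W_0}\limsup_n\norm{Tx_n}\le\norm{T}_e$ and $\norm{T}_e\le\sup_{(x_n)\in W_0}\limsup_n\norm{Tx_n}$ separately; write $\nu(T)$ for the supremum on the right-hand side. First I would dispatch the inequality $\nu(T)\le\norm{T}_e$, which needs nothing beyond complete continuity of compact operators. For any $(x_n)\in W_0$ and any $K\in\COP(X)$ we have $\norm{Tx_n}\le\norm{(T-K)x_n}+\norm{Kx_n}\le\norm{T-K}+\norm{Kx_n}$, and $\norm{Kx_n}\to 0$ because $K$ carries the weakly null sequence $(x_n)$ to a norm-null one. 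Hence $\limsup_n\norm{Tx_n}\le\norm{T-K}$ for every compact $K$; taking the infimum over $K$ and then the supremum over $(x_n)$ gives $\nu(T)\le\norm{T}_e$.

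The reverse inequality is where Theorem \ref{thm:main} enters. The idea is to move $T$ into the regime $\norm{S}=\norm{S}_e$, where Remark \ref{rem:ThreeClasses} guarantees a weakly null maximizing sequence. Concretely, I would first produce a best compact approximant, i.e.\ some $K\in\COP(X)$ with $\norm{T-K}=\norm{T}_e$, and set $S:=T-K$. Then $\norm{S}=\norm{T}_e$, and since the essential norm is invariant under compact perturbation, $\norm{S}_e=\norm{T}_e=\norm{S}$. By the first bullet of Theorem \ref{thm:main} this forces $\MINML(S)=0$, so (the infimum being attained) $0\in\WC(S)$ and there is a maximizing sequence in $\MS(S)$ having $0$ as a weak cluster point; passing to the relevant subsequence yields $(x_n)\in W_0$ with $x_n\to 0$ weakly and $\lim_n\norm{Sx_n}=\norm{S}=\norm{T}_e$. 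Because $K$ is completely continuous, $\norm{Kx_n}\to 0$, whence $\lim_n\norm{Tx_n}=\norm{T}_e$ as well, giving $\nu(T)\ge\norm{T}_e$. Combined with the first part, equality follows, and the supremum is in fact attained.

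The one nontrivial ingredient, and the main obstacle, is the existence of the best compact approximant $K$, i.e.\ proximinality of $\COP(X)$ in $\BOP(X)$; mere approximation $\norm{T-K}<\norm{T}_e+\varepsilon$ is not enough, since the perturbed operator could then land in the third class ($\norm{S}>\norm{S}_e$, $\MINML(S)=1$), from which no weakly null maximizing sequence can be extracted. To obtain exact attainment I would exploit the defining hypothesis that $\COP(X\oplus_p X)$ is an $M$-ideal in $\BOP(X\oplus_p X)$, together with the standard fact that $M$-ideals are proximinal. Let $J_1\colon X\to X\oplus_p X$ and $P_1\colon X\oplus_p X\to X$ be the canonical isometric inclusion and norm-one projection onto the first coordinate, so that $P_1J_1=I_X$, and put $\tilde T:=J_1TP_1$. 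From $\norm{\tilde T-L}\ge\norm{P_1(\tilde T-L)J_1}=\norm{T-P_1LJ_1}$ for every $L\in\COP(X\oplus_p X)$, and from $\norm{\tilde T-J_1KP_1}=\norm{T-K}$ for every $K\in\COP(X)$, one reads off $\norm{\tilde T}_e=\norm{T}_e$. Choosing $L\in\COP(X\oplus_p X)$ with $\norm{\tilde T-L}=\norm{\tilde T}_e$ by proximinality and pulling back to $K:=P_1LJ_1\in\COP(X)$ gives $\norm{T-K}\le\norm{\tilde T-L}=\norm{T}_e$; since $\norm{T-K}\ge\norm{T}_e$ always holds, we conclude $\norm{T-K}=\norm{T}_e$, as required.
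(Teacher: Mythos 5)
Your proof is correct, and its core strategy coincides with the paper's: produce a \emph{best} compact approximant $K$ with $\norm{T-K}=\norm{T}_e$, note that $S:=T-K$ then satisfies $\norm{S}=\norm{S}_e$ (essential norm being invariant under compact perturbation), invoke the first bullet of Theorem \ref{thm:main} to get $\MINML(S)=0$, and transfer the resulting weakly null maximizing sequence back to $T$ using complete continuity of $K$. The one genuine difference lies in how you justify the existence of the best approximant. The paper cites proximinality of $\COP(X)$ in $\BOP(X)$ via \cite[II. Proposition 1.1]{MidealBook}, which implicitly relies on the background fact that $(M_p)$ implies $\COP(X)$ is an $M$-ideal in $\BOP(X)$ --- the paper's definition of $(M_p)$ only says that $\COP(X\oplus_p X)$ is an $M$-ideal in $\BOP(X\oplus_p X)$. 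Your argument bypasses that fact: you apply proximinality of $M$-ideals directly in $\BOP(X\oplus_p X)$, exactly where the definition provides an $M$-ideal, and pull the best approximant back through $\tilde T=J_1TP_1$, $K=P_1LJ_1$, after verifying $\norm{\tilde T}_e=\norm{T}_e$; all the norm identities you use there are sound. This makes your proof self-contained relative to the paper's stated definition, at the cost of some bookkeeping. You also make explicit two steps the paper leaves tacit: the easy inequality $\sup_{(x_n)\in W_0}\limsup_n\norm{Tx_n}\le\norm{T}_e$ via complete continuity, and the transfer of the maximizing sequence from $S$ to $T$ (the paper's ``in which case we are done''). Finally, your observation that mere $\varepsilon$-approximation by compacts cannot replace exact proximinality --- since $T-K$ could then fall into the class $\norm{S}>\norm{S}_e$, all of whose maximizing sequences are norm convergent --- correctly pinpoints why proximinality is the essential ingredient in both proofs.
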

\begin{proof}
Since the space of compact operators \(\COP(X)\) is proximal in \(\BOP(X)\) \cite[II. Proposition 1.1]{MidealBook}, there is a \(K\in \COP(X)\) such that
\[
\norm{T}_e =\norm{T-K}. 
\]
Now, we can apply Theorem \ref{thm:main} to the the operator \(T-K\), to obtain that either \(\MINML(T-K) = 0\), in which case we are done, or \(\norm{T-K}> \norm{T-K}_e\). However, the latter cannot occur since \(K\) is the compact operator minimizing \(\norm{T-K}\), so \(\norm{T-K} = \norm{T-K}_e\) and we are done.  
\end{proof}

\section{Ackowledgements}

The author was financially supported by the Magnus Ehrnrooth Foundation.

\end{document}